\numberwithin{equation}{section}
\theoremstyle{plain}
\newtheorem{prop}{Proposition}
\newtheorem{theo}[prop]{Theorem}
\newtheorem{lemm}[prop]{Lemma}
\theoremstyle{definition}
\newtheorem{defi}[prop]{Definition}
\newtheorem{rema}[prop]{Remark}
\newtheorem{exam}[prop]{Example}
\def\lra{\longrightarrow}
\def\ra{\rightarrow}
\def\bA{{\mathbb A}}
\def\bZ{{\mathbb Z}}
\def\bR{{\mathbb R}}
\def\lcm{\mathrm{lcm}}
\def\codim{\mathrm{codim}}
\def\Conj{\mathrm{Conj}}
\def\Burn{\mathrm{Burn}}
\def\Bir{\mathrm{Bir}}
\def\lim{\mathrm{lim}}
\def\ch{\mathrm{char}}
\def\Spec{\mathrm{Spec}}
\author{Maxim Kontsevich}
\address{Institut des Hautes \'Etudes Scientifiques, 
35 route de Chartres, 91440 Bures-sur-Yvette, France}
\email{maxim@ihes.fr}
\author{Yuri Tschinkel}
\address{Courant Institute\\
                New York University \\
                New York, NY 10012 \\
                USA }
\email{tschinkel@cims.nyu.edu}
\address{Simons Foundation\\
160 Fifth Avenue\\
New York, NY 10010\\
USA}
\title[Specialization of birational types]{Specialization of birational types}
\begin{document}
\date{August 14, 2017}

\maketitle

\section{Introduction}

This paper is inspired by the discovery by Larsen and Lunts \cite{LL} 
of a remarkable connection between motivic
integration and stable rationality 
and by the recent development of these ideas by 
Nicaise and Shinder \cite{NS}, who proved
that stable rationality is preserved under specializations in smooth families. 
Our goal here is to simplify and strengthen their arguments, 
leading to the following: 

\begin{theo}
\label{theo:main}
Let 
$$
\pi: \mathcal X\ra B\quad\text{and}\quad \pi': \mathcal X'\ra B
$$
be smooth proper morphisms to a smooth, connected curve $B$, 
over a field of characteristic zero.
Assume that the generic fibers of $\pi$ and $\pi'$ 
are birational over the function field of $B$.
Then, for every closed point $b\in B$,
the fibers of $\pi$ and $\pi'$ over $b$ are birational over the residue field at $b$. 

In particular, if the generic fiber of $\pi$ is rational 
then every fiber of $\pi$ is rational. 
\end{theo}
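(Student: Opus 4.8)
The plan is to construct a specialization homomorphism on birational types and show that it carries the generic fiber to the special fiber. First I would reduce to the local situation: fix the closed point $b$ and replace $B$ by the spectrum of the discrete valuation ring $R=\mathcal{O}_{B,b}$, with fraction field $K=k(B)$ and residue field $\kappa=\kappa(b)$. It then suffices to produce a group homomorphism $\rho_b:\Burn(K)\to\Burn(\kappa)$ from the free abelian group on birational types of integral proper $K$-varieties to the analogous group over $\kappa$, with two properties: (i) $\rho_b$ depends only on the birational type over $K$; and (ii) if $X$ admits a smooth proper model $\mathcal Y\to\Spec R$, then $\rho_b([X])=[\mathcal Y_0]$, the class of the special fiber. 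Granting such a $\rho_b$, the theorem is immediate: the given morphisms $\pi,\pi'$ are themselves smooth proper models over $R$ of their generic fibers $\mathcal X_\eta,\mathcal X'_\eta$, which are birational over $K$, so
$$[\mathcal X_b]=\rho_b([\mathcal X_\eta])=\rho_b([\mathcal X'_\eta])=[\mathcal X'_b]$$
in $\Burn(\kappa)$; since $\Burn(\kappa)$ is free on birational types, equality of these classes forces $\mathcal X_b$ and $\mathcal X'_b$ to be birational over $\kappa$, component by component.

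Next I would define $\rho_b$ on an integral smooth proper $K$-variety $X$ by choosing a regular proper model $\mathcal X\to\Spec R$ whose special fiber is a strict normal crossings divisor $\mathcal X_0=\sum_{i\in I}N_iE_i$ (such models exist by resolution of singularities in characteristic zero). Writing $E_J=\bigcap_{i\in J}E_i$ and $E_J^\circ$ for the open stratum, I would set $\rho_b([X])$ equal to a universal integral combination of the birational classes $[E_J]$, modeled on the Nicaise--Shinder motivic volume $\sum_{J}(1-\mathbb{L})^{|J|-1}[E_J^\circ]$ but read off directly at the level of birational types rather than inside a Grothendieck ring. The crucial structural input is that any two such snc models are dominated by a common one and, by weak factorization (Abramovich--Karu--Matsuki--W\l odarczyk), differ by a chain of blowups and blowdowns with smooth centers lying over $b$; the heart of the construction is a blowup formula showing that the chosen combination of strata classes is unchanged under each such elementary modification. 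A parallel application of weak factorization over $K$ — lifting a blowup of the generic fiber to a blowup of models — then yields property (i), so that $\rho_b$ descends to birational types.

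Property (ii) is the consistency check: when $\mathcal Y$ is smooth over $R$, the special fiber $\mathcal Y_0$ is smooth and appears as a disjoint union of reduced components, each of multiplicity one and with no deeper intersections, so every higher stratum and every correction term drops out and $\rho_b([X])=[\mathcal Y_0]$.

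I expect the main obstacle to be the blowup invariance asserted in the second paragraph. The difficulty is that, unlike the stable birational setting where one may quotient $K_0(\mathrm{Var})$ by the Lefschetz class and exploit the scissor relations, the target here is the free abelian group on genuine birational types, which carries no cut-and-paste relations to absorb error terms; one must instead track the birational types of the strata created by a blowup — projective bundles over the center and their intersections with the exceptional divisor — and verify by explicit, sign-sensitive bookkeeping that they cancel exactly. Arranging this cancellation at the level of birational, rather than merely stable birational, types is precisely where the argument strengthens Nicaise--Shinder.
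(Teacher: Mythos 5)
Your proposal follows the same route as the paper: localize at $b$ to a DVR, define a specialization map on birational types via the strata of an snc model, prove well-definedness by two applications of weak factorization (once over the DVR for independence of the model, once over $K$ for independence of the smooth proper representative), check that a smooth model computes the class of its special fiber, and conclude using that $\Burn(\kappa)$ is free on birational types. The gap is that the step you defer as ``the main obstacle'' is not a deferrable verification: it is the entire mathematical content of the theorem (it is Theorem~\ref{theo:com} in the paper), and the formula you propose, as written, cannot satisfy it. You set $\rho_b([X])$ equal to ``a universal integral combination of the birational classes $[E_J]$.'' In the free abelian group $\Burn(\kappa)$ there are no scissor relations, and classes of varieties of different dimensions are distinct generators; since $\dim E_J = n+1-\#J$ varies with $J$, no signed combination of the bare classes $[E_J]$ can be blowup-invariant. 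Concretely: blow up a closed $\kappa$-point $z$ on a smooth model $\mathcal Y$ with special fiber $E_1$ of dimension $n$. The new snc fiber has strata $E_1'=\Bl_z(E_1)$, $E\simeq\bP^{n}$, and $E_1'\cap E\simeq\bP^{n-1}$, so invariance of a universal combination would force $c_1[\bP^{n}]+c_2[\bP^{n-1}]=0$, hence $c_1=c_2=0$, contradicting your property (ii), which requires the coefficient of the $\#J=1$ terms to be $1$.

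The missing idea is the dimension correction built into the paper's formula \eqref{eqn:formula}: the term attached to $J$ is $(-1)^{\#J-1}\bigl[D_J\times\bA^{\#J-1}\bigr]$, so that every term lies in the single graded piece of degree $n=\dim X$. In the example above this gives $[E_1']+[\bP^{n}]-[\bP^{n-1}\times\bA^{1}]=[E_1]$, which holds because $\bP^{n}$ and $\bP^{n-1}\times\bA^{1}$ are both rational of dimension $n$. This same correction drives the general cancellation that your ``sign-sensitive bookkeeping'' must produce: for a blowup with admissible center $Z\subseteq D_{J_0}$, the terms involving the exceptional divisor are indexed by $J_1\subseteq I\setminus J_0$, a connected component $\alpha$ of $Z\cap D_{J_1}$, and a subset $J_0'\subseteq J_0$; each such stratum is a projective bundle over $(Z\cap D_{J_1})_\alpha$ whose fiber dimension depends on $\#J_0'$, but after multiplying by the appropriate power of $\bA^{1}$ all these classes become equal to $\bigl[(Z\cap D_{J_1})_\alpha\times\bA^{\codim_{\mathcal X}(Z)+\#J_1-1}\bigr]$, independent of $J_0'$, so the alternating sum over $J_0'\subseteq J_0$ vanishes. (A separate direct computation, also needed, handles the case where $Z$ is an entire stratum $D_{J_0}$.) Your allusion to reading the Nicaise--Shinder volume ``at the level of birational types'' could be made precise so as to yield exactly these stabilized classes, but until the formula is pinned down and this cancellation is carried out, the proposal is a plan whose central lemma is unproven.
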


Specialization of rationality was known for families of relative dimension 3 \cite{tim}, \cite{defernex}. 
In fact, in Section~\ref{sect:abs} 
we prove a stronger specialization result, when the special fiber has singularities 
of a certain type, e.g., rational double points.

Throughout, we work over a field $k$ of characteristic zero. 
In our approach, we replace the Grothendieck ring $\mathrm{K}_0(\mathrm{Var}_k)$ of  
varieties over $k$, 
which plays a key role in motivic integration 
and which is crucial in \cite{NS}, by a new invariant, the {\em Burnside ring} of $k$, denoted by 
$$
\Burn(k).
$$ 
Its definition and formal properties 
resemble those of the classical notion of the Burnside ring of a finite group (see Section~\ref{sect:burn}). 

This ring seems better adapted to 
questions of rationality: while  $\mathrm{K}_0(\mathrm{Var}_k)$, and its quotient modulo the ideal
generated by the class of the affine line, 
are natural in the context of {\em stable} rationality, the ring 
$\Burn(k)$ captures directly birational types: as an abelian group, $\Burn(k)$ is freely 
generated by isomorphism classes of function fields over $k$. 
It is naturally graded, by the transcendence degree, and admits a  surjection
$$
\Burn(k)\ra \mathrm{gr}(\mathrm{K}_0(\mathrm{Var}_k)),
$$ 
onto the associated graded, 
with respect to the dimension filtration. In the early days of motivic integration, it was believed that this map
is an isomorphism, but now it is known that the kernel is nontrivial \cite[Theorem 2.13]{borisov}.  

The proof of Theorem~\ref{theo:main} is based on the existence of a {\em specialization morphism} 
defined in Section~\ref{sect:spec}. This specialization morphism is additive but does not preserve 
the multiplicative structure of Burnside rings.
In Section~\ref{sect:abs}, we introduce Burnside {\em groups} for schemes and 
define a class of singularities relevant for rationality considerations. 
We then prove a specialization of rationality theorem in this context. 
In Section~\ref{sect:next}, we refine the notion of Burnside rings which insures the multiplicativity of
the specialization map in this more general situation.

%We hope that this framework will lead to further applications to rationality problems.   

\

\noindent
{\bf Acknowledgments:}  The second author was partially supported by NSF grant 1601912.

\section{Burnside rings of fields}
\label{sect:burn}

We recall the following classical notion. 
Let $G$ be a finite group. 
Denote by $\Burn_+(G)$ the set  
%Isom($G$-Sets) 
of isomorphism classes of finite $G$-sets; it 
is a commutative semi-ring
with addition and multiplication given by disjoint union and Cartesian product. 
As an additive monoid, $\Burn_+(G)$ is a free commutative monoid generated by 
the set $\Conj(G)$ of conjugacy classes of subgroups of $G$. The Burnside ring
$$
\Burn(G)
$$ 
is defined as the associated Grothendieck ring. 
It is a free $\bZ$-module generated by $\Conj(G)$. The ring $\Burn(G)$ is 
a combinatorial avatar of the representation ring $R_F(G)$, which is the 
Grothendieck ring of finite dimensional $G$-representations over a field $F$: 
there is a canonical ring homomorphism
$$
\Burn(G)\ra R_F(G)
$$
for any field $F$. 

\

This definition generalizes to pro-finite groups. In particular, 
we can apply this to Galois groups $G_k$ of fields $k$. 
In this case, the set $\Conj(G_k)$ is the set of isomorphism classes of finite field extensions $L/k$,
which we can interpret as the set of
isomorphism classes of zero-dimension $k$-schemes which are non-empty, connected, and reduced. 
The corresponding semi-ring $\Burn_+(G_k)$ can be described as the set of equivalence classes of all 
smooth zero-dimensional (not necessarily connected) schemes over $k$.  

We propose the following generalization of these notions: Let $\sim_k$ be the equivalence relation on smooth,
but possibly not connected and not equidimensional, schemes over $k$ generated by open embeddings with dense image.

\begin{defi}
\label{defi:main}
The Burnside semi-ring  $\Burn_+(k)$ of a field $k$ is the
set of $\sim_k$-equivalence classes of smooth schemes over $k$ of finite type
endowed with a semi-ring structure where multiplication and addition 
are given by disjoint union and product over $k$. 
\end{defi}

For a smooth scheme $S/k$ we denote by $[S/k]$ the corresponding $\sim_k$-equivalence class. 
As an additive monoid, $\Burn_+(k)$ 
is freely generated by
the set of $\sim_k$-equivalence classes of smooth nonempty connected schemes; for two such schemes $X,X'$ we have
$[X/k]=[X'/k]$ if and only if $X$ and $X'$ are $k$-birational. Therefore, we can identify 
additive generators of $\Burn_+(k)$ with  
$\sqcup_{n\ge 0}\Bir_n(k)$, 
where
$$
\Bir_n(k), \quad n\in \bZ_{\ge 0}
$$ 
is the set of equivalence classes of irreducible algebraic 
varieties over $k$ of dimension $n$, 
modulo $k$-birational equivalence. Abusing notation, we will write 
$[L/k]$ the class of any smooth variety $X$ with function field $L=k(X)$.  
Note that 
$$
\Bir_0(k)=\Conj(G_k).
$$ 
We denote by 
$$
\Burn(k)
$$ 
the Grothendieck ring generated by $\Burn_+(k)$. 
Clearly, $\Burn(k)$ carries a natural grading by the transcendence degree over $k$, 
$$
\Burn(k)=\oplus_{n\ge 0}\Burn_n(k)
$$ 
and   
$$
\Burn_0(k)=\Burn(G_k). 
$$

\begin{rema}
\label{rema:funct}
This construction provides a 
functor from the category of fields of characteristic zero, with morphisms given by inclusion of fields, to 
the category of $\bZ_{\ge 0}$-graded commutative rings.
\end{rema}

\section{Specialization}
\label{sect:spec}

Let $\mathfrak o$ be a complete discrete valuation ring with residue field $k$ and fraction field $K$.
Since $\ch(k)=0$, by our assumptions, we have noncanonical isomorphisms
$\mathfrak o\simeq k[[t]]$ and $K\simeq k((t))$. Our goal is to define a specialization 
homomorphism of graded {\em abelian groups} (preserving addition, but not multiplication):
\begin{equation}
\label{eqn:rho}
\rho: \Burn(K)\ra \Burn(k).
\end{equation}
This requires a collection of maps
$$
\rho_n:\Bir_n(K)\ra \bZ[\Bir_n(k)], 
$$
for all $n\in \bZ_{\ge 0}$. Here  $\bZ[\Bir_n(k)]$ is the free abelian group, generated by classes in $\Bir_n(k)$.
From these, we will obtain $\rho$ by 
$\bZ$-linearity. 

To define
$\rho_n([L/K])$ we proceed in two steps:
\begin{enumerate}
\item Choose a smooth proper (or projective) 
variety $X/K$ of dimension $n$ with function field $L=K(X)$. 
\item Choose a regular model of $X$ 
$$
\pi: \mathcal X\ra \mathrm{Spec}(\mathfrak o)
$$ 
such that $\pi$ is proper and the special fiber $\mathcal X_0$ over $\mathrm{Spec}(k)$ is 
a simple normal crossings (snc) divisor.
\end{enumerate}
The last condition means that
$$
\mathcal X_0=\cup_{i\in I} d_iD_i,
$$
where $I$ is a finite set, $d_i\in \mathbb Z_{\ge 1}$ and $D_i$ 
are smooth irreducible divisors in $\mathcal X$, with transversal intersections.  
We put
\begin{equation}
\label{eqn:formula}
\rho_n([L/K]):=\sum_{\emptyset \neq J\subseteq I}  (-1)^{\# J-1} [D_{J}\times \bA^{\#J-1}/k], 
\end{equation}
where  
\begin{equation}
\label{eqn:mult}
D_J:=\cap_{j\in J}D_j. 
%\quad
\end{equation}
%and $D_{J,\alpha}$ is a component of $D_J$ corresponding to $\alpha$ (and $\pi_0$ stands for 
%the set of connected components). 
This formula is inspired by the formula (3.2.2) in \cite{NS}. 
Note that this map $\rho$ is a {\em not} a ring homomorphism; a modified version which {\em is} a 
ring homomorphism will be described in Section~\ref{sect:next}.

\begin{theo}
\label{theo:com}
The maps
$$
\rho_n: \Bir_n(K)\ra \bZ[\Bir_n(k)], \quad n\ge 0, 
$$
given by  \eqref{eqn:formula},
are well-defined.
%and the corresponding additive homomorphism 
%$$
%\rho: \Burn(K)\ra \Burn(k)
%$$ 
%preserves multiplication. 
\end{theo}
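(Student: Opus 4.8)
The plan is to show that the right-hand side of \eqref{eqn:formula} depends only on the $K$-birational class $[L/K]$, i.e. neither on the smooth proper model $X/K$ of step (1) nor on the regular snc model of step (2). First I would recast a choice as in (1)--(2) intrinsically as a \emph{good model}: a regular scheme $\mathcal X$, proper over $\mathfrak o$, whose generic fibre has function field $L$ and whose special fibre $\mathcal X_0=\sum_i d_iD_i$ is snc. Since \eqref{eqn:formula} never refers to the multiplicities $d_i$, it suffices to prove that any two good models $\mathcal X,\mathcal X'$ with $K$-birational generic fibres produce the same value. As $\mathcal X$ and $\mathcal X'$ are proper over $\mathfrak o$ and share the function field $L$, they are birational over $\mathfrak o$; resolving the closure of the graph of $\mathcal X\dashrightarrow\mathcal X'$ (embedded resolution of singularities in characteristic zero) yields a third good model $\mathcal X''$ admitting proper birational morphisms to both. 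Hence it is enough to treat a single proper birational morphism $\mathcal X''\to\mathcal X$ of good models, and then to compare $\rho$ across it.

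Next I would invoke the logarithmic/relative form of the Weak Factorization Theorem over $\Spec(\mathfrak o)$ to factor $\mathcal X''\to\mathcal X$ into a chain of blow-ups and blow-downs whose centres are smooth and in snc position with (the strict transforms of) the special fibre. As blowing down is inverse to blowing up, the whole statement reduces to one assertion: the value of \eqref{eqn:formula} is unchanged under a blow-up $\sigma\colon\mathcal X'=\Bl_Z\mathcal X\to\mathcal X$ along a smooth irreducible centre $Z$ of codimension $c$ meeting $\mathcal X_0$ transversally. Setting $J_0=\{i\in I:Z\subseteq D_i\}$ and $m=\#J_0$, the exceptional divisor $E=\bP(N_{Z/\mathcal X})$ is a $\bP^{\,c-1}$-bundle over $Z$, and it is a component of the new special fibre exactly when $Z\subseteq\mathcal X_0$, i.e. when $m\ge 1$. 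When $m=0$ the divisor $E$ is horizontal, so it is absent from the special fibre, each stratum is replaced by its strict transform, which is birational to the original, and invariance is immediate.

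The core is the bookkeeping computation in $\Burn(k)$ for $m\ge 1$. I would split the new alternating sum into the terms with $E\notin J'$ and those with $E\in J'$. In the former, each stratum is the strict transform $\widetilde D_J$, birational to $D_J$ unless $D_J\subseteq Z$, in which case $\widetilde D_J=\emptyset$. In the latter, $E_J:=E\cap\widetilde D_J$ is a projective bundle over $Z_{J''}:=Z\cap D_{J''}$ with $J''=J\setminus J_0$, whose fibre dimension drops by one for each index in $J':=J\cap J_0$. Using that a projective bundle is Zariski-locally trivial, hence birational to the product of its base with a projective—and therefore an affine—space, every such term equals $[\,Z_{J''}\times\bA^{\,c-1+\#J''}/k\,]$, independently of $J'$. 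Summing over $J'\subseteq J_0$ and using $\sum_{J'\subseteq J_0}(-1)^{\#J'}=0$ for $m\ge1$ makes the exceptional contribution vanish in the generic case $c>m$ (where no strict transform collapses); in the degenerate case $c=m$ (so $Z$ is a component of the codimension-$c$ stratum $D_{J_0}$) the strict transforms $\widetilde D_J$ with $J\supseteq J_0$ collapse to $\emptyset$, and the very same identity shows these collapsed strict-transform terms are cancelled exactly by the exceptional strata. I expect the main obstacle to be twofold: securing the relative Weak Factorization with the required snc control over $\Spec(\mathfrak o)$ rather than over a field, and, in the computation, the precise matching of the degenerate strata whose strict transforms vanish against the exceptional contributions—this is exactly where the affine factors $\bA^{\#J-1}$ in \eqref{eqn:formula} are indispensable.
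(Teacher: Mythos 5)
Your proposal is correct, and its computational heart coincides with the paper's: reduction via weak factorization to a single admissible blowup, then bookkeeping in $\Burn(k)$ where the exceptional terms are killed by the identity $\sum_{J_0'\subseteq J_0}(-1)^{\#J_0'}=0$; your generic case $c>m$ and degenerate case $c=m$ are precisely the paper's cases (a) and (b). The genuine difference is in the reduction structure. The paper splits well-definedness into two separate statements: (1) independence of the $\mathfrak o$-model for a \emph{fixed} smooth proper $X/K$, where the birational maps to be factored are isomorphisms on generic fibers, so all blowup centers are vertical; and (2) independence of the choice of $X$ within the class $L/K$, which is proved \emph{not} by a blowup calculus over $\mathfrak o$ but by weak factorization over the field $K$ (where \cite{wlo}, \cite{abr} apply verbatim) combined with embedded resolution: the center $Y\subset X$ of a blowup of the generic fiber is extended to its horizontal closure $\mathcal Y$ in a suitable snc model, and blowing up $\mathcal Y$ leaves the set of vertical components unchanged, replacing each stratum by its proper transform and hence preserving every class in \eqref{eqn:formula}. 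You merge (1) and (2) into a single statement about good models with birational generic fibers, which forces you to factor proper birational morphisms that are \emph{not} isomorphisms on the generic fiber; the price, which you correctly flag, is needing relative/logarithmic weak factorization over $\Spec(\mathfrak o)$ with possibly horizontal centers (your $m=0$ case), whereas the paper's step (2) sidesteps horizontal centers entirely (though its step (1) carries the same over-$\mathfrak o$ caveat as your argument). What your route buys is a more unified combinatorics: the paper disposes of case (b) by a separate ``direct calculation,'' while you observe that the same alternating-sum identity covers both cases once the empty stratum $K_0=J_0$ (the projectivization of the zero bundle) is removed, its missing term being exactly what cancels the collapsed strict transforms. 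One small imprecision to fix: in the degenerate case, $\widetilde D_J=\emptyset$ for $J\supseteq J_0$ holds only for the components of $D_J$ contained in $Z$; components lying in other connected components of $D_{J_0}$ survive --- the same simplification the paper makes when it assumes $D_{J_0}$ connected.
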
 

\begin{proof}
We need to establish the following:
\begin{enumerate}
\item 
For a given smooth proper $X$ over $K$, 
the right side of \eqref{eqn:formula} does not depend on the choice of a model 
$\pi: \mathcal X\ra \mathrm{Spec}(\mathfrak o)$. 
\item 
Assuming this independence of the model $\mathcal X$, the right side of \eqref{eqn:formula} does not depend on the choice of 
a smooth proper model $X/K$ of the field extension $L/K$.
%\item The resulting additive map $\rho$ preserves products. 
\end{enumerate}

The proofs of properties (1) and (2) are similar to the arguments in \cite[Section 3]{bittner}. 
We start with (1). 
By the Weak Factorization Theorem for birational maps between smooth proper $k$-varieties \cite{wlo}, \cite{abr}, 
it suffices to check that the right side of \eqref{eqn:formula} 
does not change under the following elementary transformation: the blowup
$$
\beta: \tilde{\mathcal X}:=\mathrm{Bl}_Z(\mathcal X),
$$
where $Z$ is a smooth closed irreducible subvariety in $D_{J_0}$, for some $\emptyset\neq J_0\subseteq I$, and such that 
\begin{itemize}
\item $\dim(Z)\le \dim(X)-2$,
\item $Z$ intersects the divisors $D_i$ transversally and
\item for $i\notin J_0$,
the divisors $Z_i:=D_i\cap Z$ of $Z$ 
are all distinct and form a normal crossings divisor in $Z$. 
\end{itemize}
The $k$-irreducible components of the new 
special fiber $\tilde{\mathcal X}_0$ are labeled by $\tilde{I}:=I\sqcup \{ i_Z\}$, 
where the new component corresponding to $i_Z$ is the exceptional 
divisor of $\beta$. We let $\iota:I\hookrightarrow \tilde{I}$ be the natural inclusion map.
%The multiplicity $d_{i_Z}$  of the exceptional divisor $D_{i_Z}$ in the special fiber $\tilde{\mathcal X}_0$ 
%equals $d_{J_0}$. 
There are two cases: 
\begin{itemize}
\item[(a)]
$\dim(Z) < \dim(D_{J_0})$, 
\item[(b)]
$Z$ is a component of $D_{J_0}$.
\end{itemize} 

We start with (a). For $J\subseteq I$, the stratum 
$D_{\iota(J)}\subset \tilde{\mathcal X}_0$ is a blowup of $D_J$,  
and thus birationally equivalent to $D_J$. 
%The multiplicity $d_{\iota(J)}$ (defined in \eqref{eqn:mult}) equals $d_J$.   
Hence, we already matched the terms of formula \eqref{eqn:formula}, whose
indices do not contain $i_Z$.

The terms which do contain $i_Z$ are labelled by the following data: 
\begin{itemize}
\item a subset $J_1\subseteq I\setminus J_0$,   
\item an element $\alpha\in \pi_0(Z\cap D_{J_1})$,
(the set of connected components).
\item a subset $J_0'\subseteq J_0$.
\end{itemize}
The corresponding subset  in $\tilde{I}$ is   
$\tilde{J}:=\iota(J_0'\cup J_1)\cup \{ i_Z\}$. Note that
$$
\pi_0(D_{\tilde{J}})= \pi_0(Z\cap D_{J_1})
$$
 and thus we can use $\alpha\in  \pi_0(Z\cap D_{J_1})$ as a label for a component  
$D_{\tilde{J},\alpha}$ of $D_{\tilde{J}}$. Moreover, we have a $k$-birational equivalence
$$ 
D_{\tilde{J},\alpha}\times \bA^{\codim_{\tilde{\mathcal X}}(D_{\tilde{J},\alpha})-1} \sim_k 
(Z\cap D_{J_1})_{\alpha}\times \bA^{\codim_{\mathcal X}(Z\cap D_{J_1})_{\alpha}-1}.
$$

%Note that we have an equality of greatest common divisors
%$$
%d_{\tilde{J}}:=\gcd(d_j)_{j\in \tilde{J}} = \gcd(d_{i_Z}, \gcd(d_j)_{j\in J_0'\cup J_1})= 
%\gcd(d_{J_0}, d_{J_1}),
%$$
%thus the multiplicity $d_{\tilde{J}}$ does not depend on $J_0'$. 
Observe that, for given $J_1$ and $\alpha$, the alternating sum over $J_0'$ vanishes. 
It follows that the sum over
all terms containing $i_Z$ is zero. This proves the Case (a) of  (1).

We turn to Case (b). Then, by assumptions, $\#J_0\ge 2$. 
For simplicity of the exposition, we may assume that $D_{J_0}$ is connected and hence, $Z=D_{J_0}$. 
The nonempty intersections of divisors in $\tilde{X}$ are of the form
\begin{itemize}
\item $D_{\iota(J)}\sim_k D_J$, for $\emptyset \neq J\subseteq I, I_0\not\subseteq J$,
\item $D_{\iota(J)\cup \{ i_Z\}} \sim_k D_{J\cup I_0} \times \bA^{\#(J\cup I_0)-\#J-1}$, for $J$ such that $I_0\not\subseteq J$.
\end{itemize}
A direct calculation shows that the right side of \eqref{eqn:formula} does not change.

To show (2), we use the Weak Factorization Theorem to reduce the claim to the case of   
a blowup 
$$
\beta: \tilde{X}:=\mathrm{Bl}_Y(X)\ra X
$$ 
with smooth center $Y\subset X$. 
Then, using embedded resolution of singularities compatible with a divisor,  
we can find a model $\pi:\mathcal X\ra \Spec(\mathfrak o)$ with special fiber
a divisor with simple normal crossings $\cup_{i\in I} d_iD_i$   
such that the closure $\mathcal{Y}$ of $Y$ in $\mathcal X$ is smooth and $D_i\cap \mathcal Y$ 
are simple normal crossings
divisors in $\mathcal Y$ (see \, e.g., \cite[Section 3]{bittner}). 
Note that the set of irreducible components of the special fiber $\mathcal X_0$ 
does not change under $\beta$ and the 
corresponding components $D_{J,\alpha}$ are replaced by their proper transforms in the blowup; 
this preserves their 
birational equivalence type. 
%The multiplicities of these strata do not change under blowup. 
This proves (2).  \end{proof}

We are now in the position to deduce Theorem~\ref{theo:main}.

\begin{proof}
Let $\pi: \mathcal X\ra B$ be a smooth proper morphism to a smooth connected curve $B$ over $k$ with 
fiber $X$ over the generic point of $B$. 
Let $K=k(B)$ be the function field of $B$. 
%Our goal is to show that the fiber over every closed point $b\in B$ is rational.
Let $\kappa_b$ be the residue field at $b$, a finite extension of $k$. Let 
$K_b$ be the completion of $K$ at $b$. It is a local field with residue field $\kappa_b$, isomorphic to 
$\kappa_b((t))$, where $t$ is a formal local coordinate. Let 
$$
\phi_b  : K\ra K_b
$$
be the canonical inclusion.  By functoriality (see Remark~\ref{rema:funct}), it defines a homomorpism
$$
\phi_{b,*} : \Burn(K)\ra \Burn(K_b).
$$ 
We have the specialization homomorphism 
$$
\rho: \Burn(K_b)\ra \Burn(\kappa_b)
$$
and the following identity
$$
[X_b/\kappa_b] = \rho(\phi_{b,*}([X/K])),
$$ 
which follows immediately from the definition of $\rho$, since the special fiber smooth and irreducible.
This shows that the birational type of the fiber is determined by the birational type at the generic point. 
\end{proof}

\begin{rema}
A variety $X/k$ is called stably rational of level $\le r$ if $X\times \mathbb A^r$ is $k$-rational. 
An immediate corollary is that if 
the generic fiber of a family $\pi: \mathcal X\ra B$, as in Theorem~\ref{theo:main}, 
is stably rational of level $\le r$ then 
every fiber is stably rational of level $\le r$. 
\end{rema}

\begin{rema}
In the notation of Theorem~\ref{theo:main}, given a birational isomorphism of generic fibers of 
$\pi$ and $\pi'$, 
the corresponding birational isomorphism of the special fibers is not determined uniquely. 
However, following the proof of Theorem~\ref{theo:com},
we can determine an explicit chain of elementary birational 
modifications relating the special fibers
$\mathcal X_{b}$ and $\mathcal X'_b$, for a given $b\in B$.   
\end{rema}

\begin{defi}
A variety $X/k$ is called (birationally) $\bA^1$-divisible, if there exists a variety $Y/k$ 
such that $[X/k]=[Y\times \bA^1/k]$.
\end{defi}

\begin{rema}
In the notation of Theorem~\ref{theo:main}, if the generic fiber of $\pi$ is $\bA^1$-divisible
then so is the special fiber. 
\end{rema}

\begin{rema}
\label{rema:bira}
Assume that for some $b\in B$, the fiber $\mathcal X_b$ is not $\bA^1$-divisible. 
The considerations above show that there is a canonical restriction 
homomorphism between groups of birational 
automorphisms 
\begin{equation}
\label{eqn:bira}
\mathrm{BirAut}(X/K)\ra \mathrm{BirAut}(\mathcal X_b/\kappa_b).
\end{equation}
Indeed, using formula \eqref{eqn:rho} and 
following the steps of the proof of Theorem~\ref{theo:com}, 
we see that for every model $\pi:\mathcal X\ra B$ 
of $L=K(X)$ and every local model 
$\mathcal X'_b\ra \Spec(\mathfrak o)$,  
every summand, but one, in the formula \eqref{eqn:formula} 
for the image of the specialization map $\rho$,
is $\bA^1$-divisible. This allows to define the homomorphism
\eqref{eqn:bira}.
\end{rema}

\section{Burnside groups for schemes}
\label{sect:abs}

\begin{defi}
\label{defi:new}
Let $S/k$ be a separated scheme of finite type over $k$. 
The Burnside monoid $\Burn_+(S/k)$ is the set of equivalence 
classes of maps $f: X\ra S$, where $X$ smooth over $k$, modulo the equivalence relation 
generated by 
$$
(X,f)\sim_k (U,f|_U), 
$$
where $U\hookrightarrow X$ is an open embedding with dense image. 
The monoid structure on $\Burn_+(S/k)$ is given by disjoint union. 
\end{defi}

We write $[X\stackrel{f}{\lra} S]$ for the equivalence class defined above. We denote by 
$$
\Burn(S/k)
$$ 
the corresponding Grothendieck group (it is no longer a ring). As before, the Burnside monoid and group are 
naturally graded by the dimension of $X$ over $k$.   
We recover Definition~\ref{defi:main} when  $S=\Spec(k)$.

Note that $\Burn_{+,n}(S/k)$, for $n\in \bZ_{\ge 0}$, 
is freely generated by the set 
$$
\coprod_{s\in S, \,\dim(s)\le n} \Bir_{n-\dim(s)}(\kappa_s),
$$
where $\kappa_s$ is the residue field of the Zariski point $s\in S$. 

\

\noindent
{\bf Functoriality:}
A morphism of schemes of finite type $g:S'\ra S$ over $k$ induces a homomorphism of Burnside groups
$$
g_*:\Burn(S'/k)\ra \Burn(S/k),  \quad g_*([X\xrightarrow{f} S']):=[X\xrightarrow{g\circ f} S],
$$
preserving the grading.

Let $X/k$ be an algebraic variety,
possibly reducible, nonproper, or singular. 
Let $Z\subset X$
be a closed subvariety of dimension $<\dim(X)$, 
containing the singular locus $X^{\mathrm sing}$ of $X$, 
so that $X\setminus Z$ is smooth. To such a pair $(X,Z)$ we will associate an element
$$
\partial_Z(X)\in \Burn_{\dim(X)-1}(Z/k).
$$  
These assigments will satisfy the following conditions
\begin{enumerate}
\item 
If $X$ is smooth and $Z=\cup_{i\in I} D_i$ is an snc divisor in $X$ then
\begin{equation}
\label{eqn:0}
  \partial_Z(X) =\sum_{\emptyset \neq J\subseteq I} (-1)^{\#J-1} [D_{J}\times \bA^{\#J-1}\stackrel{f_J}{\lra} Z],
\end{equation}
where $f_J$ is the composition of projection to the first factor with the natural inclusion $D_J\hookrightarrow Z$. 
\item 
Let $g:X'\ra X$ be a proper surjective morphism and $Z':=g^{-1}(Z)$. Assume that $(X',Z')$ 
satisfies the conditions above and that 
$g$ induces an isomorphisms
$$
X'\setminus Z'\ra X\setminus Z,
$$ 
in particular, $\dim(X')=\dim(X)$.
Then 
\begin{equation}
\label{eqn:01}
\partial_{Z}(X)=(g|_Z)_*(\partial_{Z'}(X')).
\end{equation}
\end{enumerate}

\begin{theo}
\label{theo:abs}
The invariants $\partial_Z(X)$
satisfying (1) and (2) exist and are uniquely defined. 
\end{theo}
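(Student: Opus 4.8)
The plan is to reduce an arbitrary pair $(X,Z)$ to the smooth snc case governed by \eqref{eqn:0} via resolution of singularities, and to observe that the blowup computation already carried out in the proof of Theorem~\ref{theo:com} is exactly what is needed to make this reduction consistent. Throughout, a \emph{resolution of $(X,Z)$} will mean a proper surjective $g:X'\ra X$ with $X'$ smooth, $g$ an isomorphism over $X\setminus Z$, and $Z':=g^{-1}(Z)$ a simple normal crossings divisor in $X'$; such a $g$ exists by embedded resolution of singularities, since $Z$ contains $X^{\mathrm{sing}}$. Note that formula \eqref{eqn:0} lands in the correct graded piece, as $\dim(D_J\times\bA^{\#J-1})=\dim(X)-1$ for every $\emptyset\neq J$.

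First I would settle \emph{uniqueness}. If $\partial$ satisfies (1) and (2), then for any $(X,Z)$ I pick a resolution $g:X'\ra X$ as above. Condition (2), applied to $g$, forces $\partial_Z(X)=(g|_Z)_*(\partial_{Z'}(X'))$, and condition (1), applied to the smooth variety $X'$ with snc divisor $Z'$, forces $\partial_{Z'}(X')$ to equal the explicit expression \eqref{eqn:0}. Hence $\partial_Z(X)$ is determined, and uniqueness holds.

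For \emph{existence} I would \emph{define} $\partial_Z(X):=(g|_Z)_*(\partial_{Z'}(X'))$ using a resolution $g$, with $\partial_{Z'}(X')$ given by \eqref{eqn:0}, and then prove independence of $g$. Given two resolutions $g_1:X_1'\ra X$ and $g_2:X_2'\ra X$, I resolve the closure of the graph of $X_1'\dashrightarrow X_2'$ to get a common resolution $g_3:X_3'\ra X$ dominating both through proper birational morphisms $h_i:X_3'\ra X_i'$ of smooth varieties, each an isomorphism away from the snc divisors and with $h_i^{-1}(Z_i')=Z_3'$. Since $g_i\circ h_i=g_3$, functoriality gives $(g_i|_Z)_*(h_i|_{Z_3'})_*=(g_3|_Z)_*$, so it suffices to prove the key identity $\partial_{Z_i'}(X_i')=(h_i|_{Z_i'})_*(\partial_{Z_3'}(X_3'))$ for the explicit expressions \eqref{eqn:0}; this then yields $(g_1|_Z)_*(\partial_{Z_1'}(X_1'))=(g_3|_Z)_*(\partial_{Z_3'}(X_3'))=(g_2|_Z)_*(\partial_{Z_2'}(X_2'))$. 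By the Weak Factorization Theorem compatible with snc divisors \cite{wlo},\cite{abr}, each $h_i$ factors into blowups and blowdowns along smooth centers meeting the divisors with normal crossings, and the invariance of the alternating sum \eqref{eqn:0} under one such elementary modification is precisely the computation of Cases (a) and (b) in the proof of Theorem~\ref{theo:com}, read over $k$ rather than over $\mathfrak o$. This is the main obstacle, but it requires no new calculation.

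Granting well-definedness, properties (1) and (2) follow formally. For (1), if $X$ is already smooth with $Z$ snc I may take $g=\mathrm{id}$, recovering \eqref{eqn:0}. For (2), given $g:X'\ra X$ as in the hypothesis, I choose a resolution $p':X''\ra X'$ of $(X',Z')$; then $p:=g\circ p'$ is a resolution of $(X,Z)$ with $p^{-1}(Z)=(p')^{-1}(Z')=Z''$, and functoriality $g_*\circ p'_*=p_*$ gives
$$
(g|_Z)_*(\partial_{Z'}(X'))=(g|_Z)_*(p'|_{Z'})_*(\partial_{Z''}(X''))=(p|_Z)_*(\partial_{Z''}(X''))=\partial_Z(X),
$$
as required.
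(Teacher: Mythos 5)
Your proposal is correct and takes essentially the same route as the paper: its (two-sentence) proof likewise defines $\partial_Z(X)$ by pushing forward formula \eqref{eqn:0} along a resolution of singularities and establishes independence of the resolution via weak factorization, reusing the blowup computation from the proof of Theorem~\ref{theo:com}. Your writeup simply fills in what the paper leaves implicit (uniqueness, the common resolution, the formal derivation of (1) and (2)); the one point glossed over — equally so in the paper — is that the Case (a)/(b) computation must be re-read in the relative group $\Burn(Z/k)$, i.e., with all birational equivalences taken compatibly with the maps to $Z$, rather than merely in $\Burn(k)$.
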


\begin{proof}
The properties (1) and (2) provide a definition, using resolution of singularities. The proof of 
independence on the choice of such a resolution is parallel to the proof of property (1) in 
Theorem~\ref{theo:com}. 
%The multiplicativity (3) is obvious when $Z$ and $Z'$ are snc divisors
%in $X$, respectively, $X'$; property (2) implies that we can reduce to this case. 
\end{proof}

\begin{defi}
\label{defi:key}
Let $X/k$ be an irreducible algebraic variety and 
$Z\subset X$ an irreducible divisor containing the singular locus
$X^{\mathrm{sing}}$ of $X$. The pair $(X,Z)$ has {\em B-rational singularities} 
if 
$$
\partial_Z(X)=[Z^{\mathrm{smooth}}\hookrightarrow Z].
$$
\end{defi}

Note that the difference
$$
\partial_Z(X)-[Z^{\mathrm{smooth}}\hookrightarrow Z]
$$
is always supported in 
$Z^{\mathrm{sing}}\cup X^{\mathrm{sing}}$. In particular, if 
$Z$ and $X$ are both smooth, then the pair $(X,Z)$ trivially has B-rational singularities. 

\begin{exam}
Let $X/k$ be a smooth variety and $Z\subset X$ a divisor, defined over $k$, 
with an isolated singularity at $z\in Z(k)$. 
Let $\beta:\tilde{X}:=\mathrm{Bl}_z(X)$ be the blowup. 
Assume that
\begin{itemize}
\item the proper transform $Z'$ of $Z$ 
is smooth and intersects
the exceptional divisor $E\simeq \mathbb P^{\dim(X)-1}$ transversally,
\item the cone over $E\cap Z'$ is $k$-rational.
\end{itemize} 
Then the pair $(X,Z)$ has B-rational singularities.

Indeed, on $\tilde{X}$ we have the divisor 
$\tilde{Z}:=\beta^{-1}(Z)= Z\cup E$. 
The formula \eqref{eqn:0} 
for $\partial_{\tilde{Z}}(\tilde{X})$ has three terms:
$$
[E\ra \tilde{Z}] + [Z'\ra \tilde{Z}] - [(E\cap Z')\times \bA^1 \ra \tilde{Z}]
$$    
We apply \eqref{eqn:01} to the blowup $\beta$ and find 
$$
\partial_Z(X):=[ \bA^{\dim(X)-1 }\ra z] + [Z^{\mathrm{smooth}}\hookrightarrow Z]  - [ (E\cap Z')\times \bA^1\ra z].
$$
By our assumption, we find that the first and the third terms cancel. Hence
$$
\partial_Z(X):= [Z^{\mathrm{smooth}}\hookrightarrow Z],
$$
which is the definition of B-rationality. 
\end{exam}

\begin{exam}
Consider pairs $(X,Z)$ with smooth $X/k$. Assume that $Z$ is a divisor that has only 
ordinary double point singularities at $k$-rational points, and 
such that the associated projective quadrics all have $k$-rational points. 
Then the pair $(X,Z)$ has B-rational singularities. 
\end{exam}

\begin{exam}
Consider pairs $(X,Z)$, with $X/k$ a smooth surface (e.g., $\bA^2$). Assume that 
$Z\subset X$ is a closed irreducible curve over $k$ 
which is unibranched at every singular point. 
Then $(X,Z)$ has B-rational singularities.  
\end{exam}

\begin{theo}
\label{theo:main2}
Let $\pi:\mathcal X\ra B$ be a proper flat morphism to a smooth connected curve $B$ over $k$. 
Assume that the generic fiber $X$ is a smooth geometrically connected variety over $K:=k(B)$. 
Let $b\subset B$ be a closed point and $\mathcal X_b:=\pi^{-1}(b)$ the fiber over $b$. Assume that
the pair $(\mathcal X, \mathcal X_b)$ has B-rational singularities. 
If $X$ is rational over $K$ then $\mathcal X_b$ is rational over the residue field $\kappa_b$ at $b$. 
\end{theo}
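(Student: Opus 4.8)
The plan is to mimic the proof of Theorem~\ref{theo:main}, but now tracking the \emph{local} contribution at the special fiber via the invariant $\partial_Z$ rather than working globally in $\Burn(K)$. First I would pass to the complete local ring $\mathfrak o$ at $b$, with fraction field $K_b\simeq \kappa_b((t))$ and residue field $\kappa_b$, so that $\mathcal X\times_B \Spec(\mathfrak o)$ gives a proper model of the generic fiber over $\mathfrak o$ whose special fiber is $\mathcal X_b$. The key point is that the specialization map $\rho$ of Section~\ref{sect:spec} computes $\rho([X/K_b])$ by first replacing the model by a regular one with snc special fiber (using embedded resolution), and the defining formula \eqref{eqn:formula} is exactly $\partial_{\mathcal X_b}(\mathcal X)$ pushed forward to $\Spec(\kappa_b)$ under the structure map $\mathcal X_b\to \Spec(\kappa_b)$.

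The next step is to exploit the hypothesis that $X$ is rational over $K$. Rationality of the generic fiber means $[X/K_b]=[\bP^n/K_b]$ in $\Burn(K_b)$, so $\rho([X/K_b])=\rho([\bP^n/K_b])$. For $\bP^n$ one has the trivial model $\bP^n_{\mathfrak o}\to \Spec(\mathfrak o)$ with smooth irreducible special fiber $\bP^n_{\kappa_b}$, whence $\rho([\bP^n/K_b])=[\bP^n_{\kappa_b}/\kappa_b]$, the class of a rational variety. Thus I would conclude
$$
\rho(\phi_{b,*}([X/K]))=[\bP^n_{\kappa_b}/\kappa_b].
$$
Now I would compute the same quantity from the given model $(\mathcal X,\mathcal X_b)$. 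Since $\partial_{\mathcal X_b}(\mathcal X)$ is well-defined by Theorem~\ref{theo:abs} and independent of the resolution, its push-forward to $\Spec(\kappa_b)$ equals $\rho(\phi_{b,*}([X/K]))$. By the assumption that $(\mathcal X,\mathcal X_b)$ has B-rational singularities, Definition~\ref{defi:key} gives $\partial_{\mathcal X_b}(\mathcal X)=[\mathcal X_b^{\mathrm{smooth}}\hookrightarrow \mathcal X_b]$, and pushing forward to a point yields $[\mathcal X_b^{\mathrm{smooth}}/\kappa_b]=[\mathcal X_b/\kappa_b]$, since $\mathcal X_b^{\mathrm{smooth}}$ is a dense open in the irreducible $\mathcal X_b$ and hence $\sim_{\kappa_b}$-equivalent to it.

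Comparing the two computations gives $[\mathcal X_b/\kappa_b]=[\bP^n_{\kappa_b}/\kappa_b]$ in $\Burn(\kappa_b)$, which by the defining property of additive generators (equality of classes of smooth connected varieties is $\kappa_b$-birationality) forces $\mathcal X_b$ to be rational over $\kappa_b$. The main obstacle I expect is the careful bookkeeping in the first paragraph: one must verify that $\rho$ applied to the generic fiber really does factor through the local invariant $\partial_{\mathcal X_b}(\mathcal X)$ attached to the \emph{given} (possibly singular) model, rather than only to an auxiliary snc model. This is precisely where property (2) of $\partial_Z$ in \eqref{eqn:01} does the work: a resolution $g:\mathcal X'\to \mathcal X$ that is an isomorphism away from $\mathcal X_b$ identifies $\partial_{\mathcal X_b}(\mathcal X)$ with the push-forward of the snc formula computed on $\mathcal X'$, so the value of $\rho$ is intrinsic to the pair $(\mathcal X,\mathcal X_b)$ and the B-rationality hypothesis can be applied directly.
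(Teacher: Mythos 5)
Your proposal is correct and takes essentially the same route as the paper's own proof: both pass to a resolution of the given model, use property (2) of $\partial_Z$ (equation \eqref{eqn:01}) to identify the push-forward of the snc formula \eqref{eqn:formula} with $\partial_{\mathcal X_b}(\mathcal X)$ and hence with $\rho(\phi_{b,*}[X/K])$, invoke the B-rationality hypothesis via Definition~\ref{defi:key}, and then compare with the value of $\rho$ on a rational class computed from the trivial model. The only differences are cosmetic: you work with $\bP^{\dim X}$ where the paper uses $\bA^{\dim X}$, and you make explicit the base change to the completion $\mathfrak o$ that the paper leaves implicit.
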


\begin{proof}
Let $\beta: \tilde{\mathcal X}\ra \mathcal X$ be a resolution of singularities of $\mathcal X$
which is an isomorphism on the complement to 
$\mathcal X_b\cup \mathcal X^{\mathrm{sing}}$ and such that
 $\beta^{-1}(\mathcal X_b \cup \mathcal X^{\mathrm{sing}})$ is
an snc divisor in $\tilde{\mathcal X}$. By our assumption that the generic fiber of $\pi$ is smooth, 
we have that
$\tilde{\mathcal X}_b=\beta^{-1}(\mathcal X_b)$ is a connected (but possibly reducible) component of 
$\beta^{-1}(\mathcal X_b \cup \mathcal X^{\mathrm{sing}})$. 
We have
$$
\beta_*(\partial_{\tilde{\mathcal X}_b}(\tilde{\mathcal X})) = \partial_{\mathcal X_b}(\mathcal X)
$$
By the assumption that the singularities of the pair $(\mathcal X, {\mathcal X}_b)$ are B-rational, we also have
\begin{equation}
\label{eqn:1}
\partial_{\mathcal X_b}(\mathcal X)= [{\mathcal X}_b^{\mathrm{smooth}}\hookrightarrow  \mathcal X_b].
\end{equation}
We have the natural projections
\begin{equation}
\Pi_b:=\pi|_{\mathcal X_b}\ra b, \quad    \tilde{\Pi}_b:\tilde{\pi}|_{\tilde{\mathcal X}_b}\ra b,
\label{eqn:2}
\end{equation}
(we identify $b=\Spec(\kappa_b)$), and  
$$
\tilde{\Pi}_b = \Pi_b\circ \beta|_{\tilde{\mathcal X}_b}, \quad \tilde{\pi}= \pi\circ \beta. 
$$
Apply the induced homomorphism $(\Pi_b)_*$ to both sides of \eqref{eqn:1}. Using functoriality, we obtain that 
\begin{equation}
\label{eqn:3}
(\tilde{\Pi}_b)_*(\partial_{\tilde{\mathcal X}_b}(\tilde{\mathcal X}))= [  \mathcal X_b^{\mathrm{smooth}} \ra b] 
\end{equation}
Note that both sides of \eqref{eqn:3} are elements of 
$\Burn(b/\kappa_b)= \Burn(\kappa_b)$. 
The left side of \eqref{eqn:3} is, by definition, 
$$
\rho(\phi_{b,*}[X/K]),
$$
where $\rho$ is the specialization homomorphism defined in Section~\ref{sect:spec} and $\phi_b: K\ra K_b$ is the natural embedding
into the completion at $b$. 

Rationality of $X$ over $K$ implies that  $[X/K]=[\bA^{\dim(X)}/K]$, so that 
the left side of \eqref{eqn:3} equals 
$$
[\bA^{\dim(X)} \ra b] = [\bA^{\dim(X)}/\kappa_b].
$$ 
Hence $\mathcal X_b$ is rational. 
\end{proof}

\begin{rema}
Recently, there has been rapid progress in establishing failure of (stable) rationality via specialization
to varieties with computable obstructions to (stable) rationality, see, e.g., 
\cite{voisin}, \cite{ctp}, \cite{totaro}, \cite{HKT}. Previously, specialization was
studied on the level of integral decomposition of the diagonal \cite{voisin}, 
universal $\mathrm{CH}_0$-triviality \cite{ctp}, 
or $\mathrm{K_0}(\mathrm{Var}_k)$ \cite{NS}. We expect 
that the flexibility of Theorem~\ref{theo:main2} 
will lead to new applications in this area.  
\end{rema}

\section{Refined specialization}
\label{sect:next}

We keep the notation of Section~\ref{sect:spec}: we consider a regular model
$$
\pi:\mathcal X\ra \Spec(\mathfrak o),
$$
with special fiber 
$$
\mathcal X_0:=\cup_{i\in I} d_iD_i.
$$ 
We defined an additive homomorphism 
$$
\rho:\Burn(K)\ra \Burn(k),
$$
where $K$ is the fraction field of $\mathfrak o$ 
and $k$ its residue field. Our first goal here is to decompose 
$$
\rho:=\sum_{d \ge 1} \rho^{(d)},
$$
where 
$$
\rho^{(d)}:=\sum_{n\ge 0} \rho^{(d)}_n, \quad  \rho^{(d)}_n: \Bir_n(K)\ra \bZ[\Bir_n(k)].
$$
The presentation as an infinite sum is meaningful, since for any given element in $\Burn(K)$, 
the evaluation of $\rho^{(d)}$ vanishes for sufficiently large $d$. 
Put
\begin{equation}
\label{eqn:r1}
\rho_n^{(d)}([L/K]):=\sum_{\emptyset \neq J\subseteq I, d_J=d}  (-1)^{\# J-1} [D_{J}\times \bA^{\#J-1}/k], 
\end{equation}
where  
$$
d_J:=\gcd(d_j)_{j\in J}.
$$
The proof that $\rho_n^{(d)}$ is well-defined is identical to the proof of Theorem~\ref{theo:com}, one has only 
to keep track of the multiplicities $d_J$ on the blowup. 

We can organize the additive homomorphisms into a generating series
\begin{equation}
\label{eqn:gen}
\hat{\rho}:=\sum_{d\ge 1} \rho^{(d)} T^d : \Burn(K)\ra \Burn(k)[T],
\end{equation}
where $T$ is a formal variable; evaluating at $T=1$ we obtain $\rho$. 

Our ultimate goal is to define a specialization homomorphism 
$$
\rho^{\bm{\mu}}_{\tau}: \Burn(K)\ra \Burn^{\hat{\bm{\mu}}}(k), 
$$
to an equivariant version of the Burnside ring defined below. The homomorphism $\rho^{\bm{\mu}}_{\tau}$ 
will be  compatible with multiplication. 

\

{\em Step 1.}
Let
$$
\hat{\bm{\mu}}:=\varprojlim \bm{\mu}_d,
$$
where $\bm{\mu}_d$ are $k$-group schemes of $d$-th roots of 1. We define rings
$$
\Burn^{\hat{\bm{\mu}}}(k)
$$
in same manner as $\Burn(k)$, except that we now consider smooth $k$-schemes, together with a continuous $\hat{\bm{\mu}}$-action. 
The multiplication is given by the product of schemes.
The additive generators of this Grothendieck group are equivalence classes 
$[\bar{X}\ra X,d]$, where $d\in \bZ_{\ge 1}$, and $\bar{X}\ra X$ is 
a $\bm{\mu}_d$-torsor over a smooth scheme $X$. In terms of fields, the generators are isomorphism classes 
$[L/k,\lambda,d]$, where
$L/k$ is a function field, $d\in \bZ_{\ge 1}$, and $\lambda\in L^\times/(L^\times)^d$ (by Kummer theory). 

\

{\em Step 2.}
Fix a nonzero element 
$$
\tau\in \mathfrak m/\mathfrak m^2\simeq k,
$$ 
where $\mathfrak m$ is the maximal ideal of $\mathfrak o$. 
The homomorphism $\rho^{\bm{\mu}}_{\tau}$ depends on this choice.

\

{\em Step 3.}
As in Section~\ref{sect:spec}, we assume that the $k$-irreducible components $D_i$ of the special fiber
$\mathcal X_0$ form a strict normal crossings divisor. Denote by $\mathcal N_i$ the normal bundle of $D_i$. The fibration
structure $\pi:\mathcal X\ra \Spec(\mathfrak o)$ defines, for all $\emptyset \neq J\subseteq I$, an isomorphism 
\begin{equation}
\label{eqn:tauu}
\nu_J':\bigotimes_{j\in J} \left({{\mathcal N}_j}_{|_{D_J^\circ}}\right)^{\otimes d_j} \stackrel{\sim}{\longrightarrow} \mathfrak m/\mathfrak m^2\otimes_k 
\mathcal O_{D_J^\circ},
\end{equation}
where 
$$
D_J^\circ=D_J \setminus \cup_{i\notin J} D_i.
$$
Using the element $\tau$ we identify the right side of \eqref{eqn:tauu} with the trivial line bundle and obtain an isomorphism
$$
\nu_J: \mathcal L_J^{\otimes d_J}  \stackrel{\sim}{\longrightarrow} \mathcal O_{D_J^\circ},
$$
where 
$$
\mathcal L_J:=\bigotimes_{j\in J} \left({{\mathcal N}_j}_{|_{D_J^\circ}}\right)^{\otimes d_j/d_J}.
$$
 
Define $\bar{D}^\circ_J$ as the locus of points in the total space of the line bundle $\mathcal L_J$, whose $d_J$-th tensor power
is mapped to 1 by $\nu_J$. By construction, $\bar{D}^\circ_J\ra D_J^\circ$ is a $\bm{\mu}_{d_J}$-torsor.

\

{\em Step 4.}
Define
\begin{equation}
\label{eqn:uu}
\rho_{\tau}^{\bm{\mu}} ([X/K]) :=\sum_{\emptyset\neq J\subseteq I} (-1)^{\#J-1} [\bar{D}_J^\circ\times \bA^{\#J-1}\ra  D_J^\circ\times \bA^{\#J-1},d_J]
\end{equation}

\begin{theo}
\label{theo:tau}
The additive homomorphism 
$$
\rho_{\tau}^{\bm{\mu}}:\Burn(K)\ra \Burn^{\hat{\bm{\mu}}}(k)
$$
is well-defined and is a ring homomorphism. 
\end{theo}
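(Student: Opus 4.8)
The plan is to treat the two assertions in turn: that $\rho_\tau^{\bm{\mu}}$ is independent of the choices of regular model $\mathcal X$ and of smooth proper model $X/K$, and that it respects products. For well-definedness I would run the Weak Factorization argument of the proof of Theorem~\ref{theo:com} essentially verbatim, now transporting along the blowups the full refined package: the multiplicities $d_i$, the normal bundles $\mathcal N_i$, the line bundles $\mathcal L_J$ with the trivializations $\nu_J$ of $\mathcal L_J^{\otimes d_J}$ built from $\tau$, and hence the $\bm{\mu}_{d_J}$-torsors $\bar D_J^\circ\to D_J^\circ$. The one genuinely new bookkeeping input is the multiplicity of the exceptional divisor of $\beta:\widetilde{\mathcal X}=\Bl_Z(\mathcal X)$: for a center $Z\subseteq D_{J_0}$ meeting the remaining $D_i$ transversally it equals $d_{i_Z}=\sum_{j\in J_0}d_j$. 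I would compute $\mathcal N_{i_Z}$ and its restrictions to the new strata, and check that the $\nu_J$ transform compatibly, so that each $\bar D_{\widetilde J}^\circ$ is the expected $\bm{\mu}_{d_{\widetilde J}}$-torsor.

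With this in place, the terms of \eqref{eqn:uu} whose index does not contain $i_Z$ match as in Theorem~\ref{theo:com}, now as equivariant classes, because the birational maps $D_{\iota(J)}\to D_J$ pull the torsors back isomorphically and the Kummer class is a birational invariant. The terms whose index contains $i_Z$ (Cases (a) and (b)) must be shown to cancel in $\Burn^{\hat{\bm{\mu}}}(k)$. This is the delicate point: the orders $d_{\widetilde J}=\gcd\big((d_j)_{j\in J_0'\cup J_1},d_{i_Z}\big)$ attached to these summands are \emph{not} constant, so the cancellation is no longer the naive term-by-term identity of Theorem~\ref{theo:com} but an identity among $\bm{\mu}$-torsors of a priori different orders; establishing it at the refined equivariant level, using the structure of $\Burn^{\hat{\bm{\mu}}}(k)$, is the substance of well-definedness. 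Independence of the proper model $X/K$ is then routine, exactly as in Theorem~\ref{theo:com}: one chooses a simultaneous model with smooth center meeting the $D_i$ transversally, the components and multiplicities are unchanged, and the strata are replaced by proper transforms carrying canonically isomorphic torsors.

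The substantive new statement is multiplicativity. Given $X,Y/K$ with chosen regular models $\mathcal X,\mathcal Y$, I would evaluate $\rho_\tau^{\bm{\mu}}([X\times_K Y])$ on a model of $X\times_K Y$ obtained from $\mathcal X\times_{\mathfrak o}\mathcal Y$. The latter is neither regular nor snc: along a product stratum $D_J^\circ\times D_{J'}^{\prime\circ}$, after trivializing both uniformizer pullbacks against the common $\tau$, it is the toric (binomial) singularity $\prod_{j\in J}x_j^{d_j}=\prod_{j'\in J'}y_{j'}^{d'_{j'}}$. By the already-established well-definedness I may replace it by any snc model, and I would use an explicit toric resolution of this equation; the exceptional rays carry multiplicities read off from the $t$-order functional and $\bm{\mu}$-torsors read off from the finite cokernels of the associated lattice maps. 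The plan is to check that the alternating sum of the equivariant classes of these exceptional strata equals
\begin{equation*}
[\bar D_J^\circ\times\bA^{\#J-1}\to D_J^\circ\times\bA^{\#J-1},d_J]\cdot[\bar D_{J'}^{\prime\circ}\times\bA^{\#J'-1}\to D_{J'}^{\prime\circ}\times\bA^{\#J'-1},d'_{J'}],
\end{equation*}
with the diagonal $\hat{\bm{\mu}}$-action on the product giving precisely the torsor predicted by the toric data. This is the Burnside-ring incarnation of the motivic Thom--Sebastiani/convolution formula, and it is here that the role of $\tau$ is essential: it forces the two normal trivializations to refer to the same uniformizer, so that the roots of unity produced by the toric resolution assemble into the diagonal product torsor, with the interaction of $d_J$, $d'_{J'}$, $\gcd$ and $\lcm$ dictating which $\bm{\mu}_e$ occurs.

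I expect the toric computation in the multiplicativity step to be the main obstacle: one must identify, intrinsically and compatibly with $\tau$, the $\bm{\mu}$-torsor on each exceptional stratum of the resolution of $\{\prod x_j^{d_j}=\prod y_{j'}^{d'_{j'}}\}$ and verify that, after signed summation, only the product class survives. Since the equivariant cancellations needed for well-definedness are of the same nature but localized at a single blowup, I would settle well-definedness first and then reuse that torsor calculus, together with the toric resolution, to establish multiplicativity.
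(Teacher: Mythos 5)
Your plan follows the paper's own route step for step: well-definedness by rerunning the weak-factorization argument of Theorem~\ref{theo:com} with torsor bookkeeping, and multiplicativity by resolving the binomial singularities of $\mathcal X\times_{\Spec(\mathfrak o)}\mathcal X'$ torically and matching a signed sum of exceptional strata against the product class. The substantive differences lie in what is actually supplied at the two hard points. For multiplicativity, the paper does not resolve the singularity $\prod_j x_j^{a_j}=\prod_{j'}y_{j'}^{a_{j'}}$ one stratum at a time: it constructs, inductively, a family of smooth fans $\Sigma(\mathbf a,\mathbf a')$ supported on the cones $\Lambda(\mathbf a,\mathbf a')$ which is equivariant under $\mathfrak{S}_r\times\mathfrak{S}_{r'}$ and compatible with restriction to boundary faces. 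That compatibility is precisely what makes the local toric charts glue to a single global snc model $\mathcal W$ of $X\times X'$; your plan, which resolves ``this equation'' along each product stratum separately, omits this gluing requirement, and without it there is no model on which to evaluate \eqref{eqn:uu}. Once $\mathcal W$ exists, the paper's remaining inputs are Lemma~\ref{lemm:dj}, identifying $\bar D^\circ_{J,J',\sigma}\simeq \bar D^\circ_J\times\bar D^\circ_{J'}\times\mathbb T_\sigma$ as $\bm{\mu}_{\lcm(d_J,d_{J'})}$-torsors, and the Euler-characteristic identity $\sum_\sigma(-1)^{\dim\sigma}=(-1)^{\dim\Lambda(\mathbf a,\mathbf a')}$, which collapses the sum over $\sigma$; these two statements are the precise content of what you call ``the toric computation'' and would both have to be proved.

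On well-definedness, your instinct that the cancellation of the terms containing $i_Z$ is the delicate point is correct, and here you are more candid than the paper, which disposes of the issue in one sentence (the blowup strata are birationally projective bundles). That sentence does not engage with the phenomenon you point out: the orders $d_{\tilde J}$ genuinely vary with $J_0'$, and classes in $\Burn^{\hat{\bm{\mu}}}(k)$ with different $d$, or different generic stabilizers, are distinct additive generators. Concretely, blowing up a point $D_1\cap D_2$ with $(d_1,d_2)=(2,3)$ creates an exceptional curve of multiplicity $5$ whose open stratum carries a connected $\bm{\mu}_5$-torsor with free $\bm{\mu}_5$-action, and in the comparison of the two models this single term must absorb the discarded $d_J=1$ term, which has trivial action; that is not a term-by-term cancellation, and it is not implied by the projective-bundle remark. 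So the verdict is: same approach as the paper; your multiplicativity step is missing the compatible-fan construction that the paper makes explicit; and the equivariant well-definedness is left unestablished by your plan and, in essence, by the paper's one-line argument as well --- your flagging of it as the crux is the accurate assessment.
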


\begin{proof}
The verification that $\rho_{\tau}^{\bm{\mu}}$ is well-defined is completely analogous to the proof of Theorem~\ref{theo:com}, 
we only need to keep track of the torsor structures, which is straightforward because all intersections of components of snc
divisors on the blowup are birationally equivalent to projective bundles.

The main difficulty is the proof of multiplicativity. 
The issue we face is that the fiber product of two snc models over $\Spec(\mathfrak o)$ 
is not, in general, an snc model: it has toric singularities of a particular type. 
To resolve this issue is suffices to provide an explicit resolution of singularities of affine 
toric varieties of the following type:
$$
\prod_{j\in J}x_j^{a_j} = \prod_{j'\in J'} y_{j'}^{a_{j'}},
$$
where $J,J'$ are nonempty, and $a_j,a_{j'}\ge 1$, for all $j\in J, j'\in J'$. 
Moreover, the family of resolutions
should be compatible with restrictions, when some of the variables are set to 1. Using the toric language, 
we translate this problem to a combinatorial construction: for every pair $r,r'\in \bZ_{\ge 1}$ and vectors
$$
\mathbf{a}:=(a_1,\ldots, a_r)\in \bZ^r_{\ge 1}, \quad 
\mathbf{a}':=(a_1,\ldots, a_{r'})\in \bZ_{\ge 1}^{r'}
$$ 
consider
the cone 
$$
\Lambda(\mathbf{a}, \mathbf{a}'):=\{ (\mathbf{u},\mathbf{u}')\in  \bR^{r+r'}_{\ge 0} \, \mid \, 
\langle \mathbf{u}, \mathbf{a}\rangle =\langle\mathbf{u}', \mathbf{a}'\rangle \} \subset \bR^{r+r'}, 
$$
where $\langle \cdot, \cdot\rangle$ is the standard scalar product.

Note that all boundary strata of $\Lambda(\mathbf{a}, \mathbf{a}')$ are 
naturally isomorphic to  $\Lambda(\mathbf{b}, \mathbf{b}')$, 
for some integral vectors $\mathbf{b}, \mathbf{b}'$ of smaller
dimension (up to permutation of indices). 

Then choose fans $\Sigma(\mathbf{a}, \mathbf{a}')$
supported in  $\Lambda(\mathbf{a}, \mathbf{a}')$, for all $\mathbf{a}, \mathbf{a}'$, such that the collection 
of these fans satisfies the following properties:
\begin{itemize}
\item All faces  
$\sigma\in \Sigma(\mathbf{a}, \mathbf{a}')$ are primitive and simplicial, 
i.e., the corresponding toric variety
$X_{\Sigma(\mathbf{a}, \mathbf{a}')}$ is smooth. 
\item The choice of fans is covariant with respect to symmetric groups 
$\mathfrak S_{r}\times \mathfrak{S}_{r'}$
acting on the indices.
\item 
For all $\mathbf{a}, \mathbf{a}'$, the induced fan on a boundary of $\Lambda(\mathbf{a}, \mathbf{a}')$
coincides with the corresponding fan  $\Sigma(\mathbf{b}, \mathbf{b}')$. 
\end{itemize} 
The existence of such an equivariant, compatible with restriction to boundary, family of fans can be proved
inductively (see, e.g.,\cite{ct}).

Now consider two snc models $\mathcal X, \mathcal X'$ of smooth proper 
$K$-varieties $X$ and $X'$,
with function fields $K(X), K(X')$.  
A choice of a family of fans as above determines an snc model $\mathcal W$  over $\Spec(\mathfrak o)$
of the product $W:=X\times X'$, endowed with a natural map  
$$
\mathcal W\ra \mathcal X\times_{\Spec(\mathfrak o)} \mathcal X'.
$$
The special fiber $\mathcal W_0$ 
is stratified, with strata 
labelled by:
\begin{itemize}
\item[(i)] nonempty subsets $J\subseteq I, J'\subseteq I'$,  
\item[(ii)] $\sigma\in \Sigma(\mathbf{a}, \mathbf{a}')$ such that its interior is contained in 
the interior of $\Lambda(\mathbf{a}, \mathbf{a}')$, where the entries of the vector 
$\mathbf{a}$, respectively, $\mathbf{a}'$, are $d_j$, $j\in J$, respectively, $d_{j'}$, $j'\in J'$,
(with some enumeration of elements in $J,J'$). 
\end{itemize}
For each triple $(J,J', \sigma)$ as above, the corresponding open
stratum  $D^\circ_{J,J',\sigma}$ is a disjoint union of open snc strata in $\mathcal W_0$. Moreover, 
it is the base of a $\bm{\mu}_{\lcm(d_J,d_{J'})}$-torsor 
$$
\bar{D}^\circ_{J,J',\sigma}\ra D^\circ_{J,J',\sigma}.
$$ 

\begin{lemm}
\label{lemm:dj}
There exists a natural isomorphism of  $\bm{\mu}_{\lcm(d_J,d_{J'})}$-torsors

\

\centerline{
\xymatrix{
\bar{D}^\circ_{J,J',\sigma}\ar[d] \ar[r]^{\sim\hskip 0.7cm } &  \bar{D}^\circ_J\times \bar{D}^\circ_{J'}\times \mathbb T_\sigma  \ar[d]   \\
 D^\circ_{J,J',\sigma} \ar[r]^{\sim\hskip 2cm } &  (D^\circ_J\times D^\circ_{J'})/\bm{\mu}_{\lcm(d_J,d_{J'})}   \times \mathbb T_\sigma 
}
}

\

\noindent
where $\mathbb T_{\sigma}$ is a split torus over $k$. 
\end{lemm}

\begin{proof}
This is a straightforward verification.  
\end{proof}

Fix nonempty $J,J'$. 
We substitute \eqref{eqn:formula} into
$$
\rho_{\dim(X)}([K(X)/K])\cdot \rho_{\dim(X')}([K(X')/K])
$$
and extract products of terms
corresponding to $J$ and $J'$. We claim  that 
the sum of these terms equals to the sum over $\sigma$ 
of contributions of snc strata in 
$\mathcal W_0$, labelled by $J,J'$ and $\sigma$ as in (ii) above. 
This claim follows from the equality (Euler characteristic computation):
$$
\sum_{\sigma} (-1)^{\dim(\sigma)} = (-1)^{\dim(\Lambda(\mathbf{a}, \mathbf{a}'))},
$$
where $\sigma$ are subject to (ii). 
Multiplicativity of $\rho$ now follows by taking summation over all $J,J'$. 
\end{proof}

We now explain the relation between the multiplicative specialization homomorphism we constructed
and $\hat{\rho}$ defined in \eqref{eqn:gen}. 
Consider additive maps
$$
\Burn^{\hat{\bm{\mu}}}(k)\stackrel{\psi}{\lra} \Burn(k)[T]\xrightarrow{T=1} \Burn(k)
$$
given by
$$
[Y'\ra Y,d]\mapsto [Y/k]\cdot T^d \mapsto [Y/k].
$$
We have
$$
\hat{\rho}=\psi\circ \rho^{\bm{\mu}}_{\tau},
$$
and specializing $T=1$ we obtain $\rho$ from \eqref{eqn:rho}. 
The advantage of the nonmultiplicative homomorphisms $\rho$ and $\hat{\rho}$ is that they do not depend on 
the choice of the cotangent vector $\tau$. 

There is also a {\em ring homomorphism}
$$
\chi: \Burn^{\hat{\bm{\mu}}}(k) \ra \Burn(k)
$$
given by
$$
[Y'\ra Y,d]\mapsto [Y/k]\cdot T^d \mapsto [Y'/k].
$$
Composing $\rho^{\bm{\mu}}_{\tau}$ with this homomorphism we obtain a ring homomorphism
$$
\Burn(K)\ra \Burn(k),
$$
depending on $\tau$. 
This is the Burnside ring version of motivic reduction/motivic volume \cite[Theorem 2.6.1]{NP}, \cite{HK}. 

\begin{rema}
These finer homomorphisms can be applied to considerations in Section~\ref{sect:abs}, leading to more refined
invariants. 
\end{rema}

\bibliographystyle{alpha}
\bibliography{types}
\end{document}